\documentclass[12pt,reqno]{amsart}
\usepackage{amssymb,amsthm}
\usepackage[T2A]{fontenc}
\usepackage[utf8]{inputenc}
\usepackage{bbm}
\usepackage{pdfsync}
\usepackage{ytableau}
\usepackage{graphicx}

\advance\hoffset by -0.5truein
\textwidth=150truemm
\textheight=210truemm

\let\<\langle
\let\>\rangle

\def\Pois{\mathrm{Pois}}
\def\Lie{\mathrm{Lie}}

\def\GD {\mathop {\fam0 GD }\nolimits}
\def\SGD {\mathop {\fam0 SGD }\nolimits}

\def\Magma {\mathop {\fam0 Magma }\nolimits}

\def\Com{\mathrm{Com}}
\def\Nov{\mathrm{Nov}}

\def\Der{\mathrm{Der}}
\def\wt{\mathop {\fam 0 wt}\nolimits}

\newtheorem{definition}{Definition}
\newtheorem{theorem}{Theorem}
\newtheorem{lemma}{Lemma}

\newtheorem{example}{Example}

\title[Free special Gelfand---Dorfman algebra]{Free special Gelfand---Dorfman algebra}
\author[V. Gubarev, B. K. Sartayev]{V. Gubarev, B. K. Sartayev}

\thanks{The authors were supported by the grant of the President of the Russian Federation for young scientists (MK-1241.2021.1.1).}

\begin{document}

\sloppy

\begin{abstract}
A Gelfand---Dorfman algebra is called special if it can be embedded into a differential Poisson algebra.
We find a new basis of the free Novikov algebra.
With its help, we construct the monomial basis of the free special Gelfand---Dorfman algebra.
%We obtain the defining identities of the variety.
\end{abstract}

\keywords{Differential algebra, Poisson algebra, identity, Gelfand---Dorfman algebra}
\subjclass[2010]{
17B63, %Poisson algebras
37K30, %Relations with infinite-dimensional Lie algebras and other algebraic structures
08B20%Free algebras
}

\maketitle

\section{Introduction}

A vector space~$A$ with a bilinear product~$\circ$ satisfying the identities
\begin{gather}
(x_1\circ x_2)\circ x_3-x_1\circ(x_2\circ x_3)=(x_2\circ x_1)\circ x_3-x_2\circ(x_1\circ x_3), \label{LeftSym} \\
(x_1\circ x_2)\circ x_3=(x_1\circ x_3)\circ x_2, \label{RightCom}
\end{gather}
is called a Novikov algebra.
Novikov algebras were introduced in the study of Hamiltonian operators concerning integrability of certain partial differential equations~\cite{GelDor79}.
Later, Novikov algebras appeared in the study of Poisson brackets of hydrodynamic type~\cite{BalNov85}.

It is well-known that given a commutative algebra $A$ with a derivation $d$,
the space $A$ under the product  
$x_1\circ x_2 = x_1d(x_2)$
is a Novikov algebra.
Moreover, all identities fulfilled in $(A,\circ)$ are consequences of~\eqref{LeftSym} and~\eqref{RightCom}. Applying the rooted trees, the monomial basis of the free Novikov algebra in terms of $\circ$ was constructed in~\cite{DzhLofwall}. In terms of Young diagram, the basis was constructed in~\cite{DzhIsmailov}.

An algebra~$A$ satisfying only the identity~\eqref{LeftSym} is called a left-symmetric algebra.
Left-symmetric algebras have been studying since 1960s, they have applications in affine geometry, ring theory, vertex algebras etc, see the survey~\cite{Burde06}. Left-symmetric algebras embeddable under the operation $x_1\circ x_2 = x_1d(x_2)$ into permutative algebras were studied in~\cite{KS2022}.

Note that every associative algebra is left-symmetric one, and every left-symmetric algebra under the commutator $[a,b] = a\circ b - b\circ a$ is a Lie algebra. 
For that reason, every Novikov algebra under the commutator is a Lie algebra satisfying an additional identity of degree~5 of the following form:
$$
\sum_{\sigma\in S_4}(-1)^{\sigma}[x_{\sigma(1)},[x_{\sigma(2)},[x_{\sigma(3)},[x_{\sigma(4)},x_5]]]]=0.
$$
To find all special identities for Novikov algebras considered under the commutator is still an open problem. 
It is equivalent to the same question formulated for a~commutative algebra~$C$ with a derivation~$d$
considered under the product
$$
[x_1,x_2] = x_1d(x_2) - x_2d(x_1),
$$
which is called Wronskian bracket.

Given a Poisson algebra~$(P,\cdot,\{,\})$ with a~derivation~$d$ due to both products, define on~$P$ new operations as follows,
$$
x_1\circ x_2 = x_1 d(x_2),\quad
[x_1,x_2] = \{x_1,x_2\}.
$$
Recall that the variety of Poisson algebras is defined by the identities,
\begin{gather}
x_1 \cdot x_2 = x_2\cdot x_1, \quad
(x_1 \cdot x_2)\cdot x_3 = x_1 \cdot (x_2\cdot x_3), \\
\{x_1,x_2\} = - \{x_2,x_1\}, \quad
\{\{x_1,x_2\},x_3\} + \{\{x_2,x_3\},x_1\} + \{\{x_3,x_1\},x_2\} = 0, \label{Lie-id} \\
\{x_1, x_2\cdot x_3\} = \{x_1,x_2\}\cdot x_3 + x_2\cdot\{x_1,x_3\}.
\end{gather}
The algebra~$P^{(d)}:=(P,\circ,[,])$ has a Novikov product~$\circ$, a Lie product~$[,]$, and moreover, 
the following identities hold,
\begin{gather}
x_2\circ[x_1,x_3]=[x_1,x_2\circ x_3]-[x_3,x_2\circ x_1]+[x_2,x_1]\circ x_3-[x_2,x_3]\circ x_1, \label{gd} \\
[x_1,(x_2\circ x_3)\circ x_4]=[x_1,x_2\circ x_3]\circ x_4+[x_1,x_2\circ x_4]\circ x_3-([x_1,x_2]\circ x_3)\circ x_4, \label{spec1} 
\end{gather}

\vspace{-0.7cm}

\begin{multline}\label{spec2}
[x_3\circ x_1,x_4\circ x_2]=[x_4\circ x_1,x_3\circ x_2]+[x_3,x_4\circ x_1]\circ x_2-[x_4,x_3\circ x_2]\circ x_1 \\ 
-[x_4,x_3\circ x_1]\circ x_2+[x_3,x_4\circ x_2]\circ x_1+2([x_4,x_3]\circ x_1)\circ x_2.
\end{multline}
There may exist identities of degree greater than~5 fulfilled in~$P$ which are independent from~\eqref{gd}--\eqref{spec2}.

An algebra $(G,\circ,[,])$ such that $(G,\circ)$ is Novikov, $(G,[,])$ is Lie, and the
identity~\eqref{gd} holds is called a Gelfand---Dorfman algebra ($\GD$-algebra)~\cite{WenHong, Xu2000}. 
$\GD$-algebras appeared in~\cite{GelDor79} as a source of Hamiltonian operators: 
with the help of the structure constants of a Gelfand–-Dorfman algebra one may construct a~differential operator.
In~\cite{Xu2000}, it was shown that $\GD$-algebras are closely related with Lie conformal algebras.

It is worth to note that the identities~\eqref{spec1} and~\eqref{spec2} are not fulfilled in the free GD-algebra, thus, these identities are called {\it special identities}. Moreover, the identities~\eqref{spec1} and~\eqref{spec2} are mutually independent \cite{KSO,KS}. 

A Gelfand---Dorfman algebra~$G$ is called {\it special} if there exists a Poisson algebra~$P$ with a derivation~$d$ such that $G$ injectively embeds into~$P^{(d)}$.
It is known that the class of special GD-algebras forms a variety~\cite{KS}, thus, we may consider
the free special Gelfand---Dorfman algebra~$\SGD\langle X\rangle$ generated by a~set $X$.
By $\Com\Der\langle X\rangle$ and $\Pois\Der\langle X\rangle$ we denote 
the free commutative and the free Poisson algebra with a derivation in the signature generated by~$X$, respectively.

In \cite{KS} it was proved that every 2-dimensional $\GD$-algebra is special. 
Another interesting result is that a $\GD$-algebra such that $[a,b] = a\circ b - b\circ a$ is special~\cite{KP2020}.

Therefore, a natural problem arises: 
To construct a monomial basis of the free $\SGD$-algebra in terms of $\circ$ and $[\;,\;]$. 
We solve this problem.
For the solution, we construct a new monomial basis of the free Novikov algebra.
%Using this basis, we find the defining identities of the variety of special $\GD$-algebras. 

For all mentioned varieties we have the following diagram:

\begin{picture}(30,80)
\put(195,57){$\hookrightarrow$}
\put(160,44){\normalsize\rotatebox[origin=c]{270}{$\hookrightarrow$}}
\put(195,32){$\hookrightarrow$}
\put(230,44){{\normalsize\rotatebox[origin=c]{270}{$\hookrightarrow$}}}
\put(143,57){$\Nov\langle X\rangle$}
\put(143,31){$\SGD\langle X\rangle$}
\put(213,57){$\Com\Der\langle X\rangle$}
\put(213,31){$\Pois\Der\langle X\rangle$}
\end{picture}
\vspace*{-\baselineskip}

In~\S2, we construct new basis of the free Novikov algebra (Theorem~2).
In~\S3, we define what is a canonical form  of monomials $\Pois\Der\langle X\rangle$ of weight~$-1$.
Finally, in~\S4, the linear basis of the free special $\GD$-algebra is constructed (Theorem~3).

For simplicity, we identify the element $d(x)$ with $x'$. 
In this paper, all algebras are defined over a field of characteristic~0.

\section{Basis of free Novikov algebra}\label{basisofNov}

Let $X = \{x_i \mid i\in I\}$, where $I$ is well ordered set.
The free commutative algebra $\Com\Der\langle X,d\rangle$ with a~derivation~$d$ in the signature has a standard linear basis consisting of monomials
$$
x_{i_1}^{(r_1)}\ldots x_{i_k}^{(r_k)}, \quad x_j\in X,\ i_1\leq \ldots\leq i_k,\ r_j\geq0.
$$
Here $x_j^{(0)} = x_j$, $x_j^{(n+1)} = \big(x_j^{(n)}\big)'$.
Thus, the elements $x^{(r)}$, where $x\in X$ and $r\in \mathbb{N}$, generate $\Com\Der\langle X,d\rangle$ as a commutative algebra.
For simplicity, we will denote $\Com\Der\langle X,d\rangle$ as $\Com\Der\langle X\rangle$
omitting the symbol~$d$.

\begin{definition}
Let $u$ be a monomial from the standard basis of $\Com\Der\langle X,d\rangle$.
Define the {\em weight} function $\wt(u)\in \mathbb Z$ by induction as follows,
\begin{gather*}
\wt(x)=-1,\quad x\in X; \\
\wt(d(u)) = \wt(u)+1; \quad \wt(uv)=\wt(u)+\wt(v).
\end{gather*}
\end{definition}

We may consider the space $\Com\Der\langle X\rangle$
under the product $u\circ v = ud(v)$, denote the obtained Novikov algebra as 
$\Com\Der\langle X\rangle^{(d)}$.
Let $\Com\Der\langle X\rangle_{-1}$ be a span of monomials from the standard basis of $\Com\Der\langle X\rangle$ of weight~$-1$. Note that $\Com\Der\langle X\rangle_{-1}$ is closed under the Novikov product $\circ$, so, it is a Novikov subalgebra of $\Com\Der\langle X\rangle^{(d)}$.

Let us recall the well-known results related to the free Novikov algebra.

\newpage

\begin{theorem} \label{thm:embedding}
a)~\cite{Umirbayev,DzhLofwall} 
We have $(\Com\Der\langle X\rangle_{-1},\circ)\cong \Nov\langle X\rangle$.

b)~\cite{BCZ2017,KS} Every Novikov algebra can be embedded into a free differential commutative algebra.
\end{theorem}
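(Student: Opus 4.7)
The plan is to prove part~(a) by a surjection-plus-dimension-count, and part~(b) by a universal enveloping construction combined with a Gr\"obner--Shirshov-style confluence check.

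For part~(a), the surjectivity half is cheap: using the Leibniz rule one checks $(u\circ v)\circ w - u\circ(v\circ w) = -uv\,d^2(w)$, which is symmetric in $u,v$, and $(u\circ v)\circ w = u\,d(v)\,d(w)$, which is symmetric in $v,w$. Hence $(\Com\Der\langle X\rangle,\circ)$ satisfies~\eqref{LeftSym} and~\eqref{RightCom}. Since $\wt(u\circ v)=\wt(u)+\wt(v)+1$, the weight-$(-1)$ subspace is closed under $\circ$ and contains $X$, giving a surjective Novikov homomorphism $\phi\colon \Nov\langle X\rangle \twoheadrightarrow \Com\Der\langle X\rangle_{-1}$ fixing $X$. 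For injectivity I would match dimensions of multihomogeneous components: on the source, via a known combinatorial basis of $\Nov\langle X\rangle$ (e.g.\ the rooted trees of~\cite{DzhLofwall} or the Young diagrams of~\cite{DzhIsmailov}); on the target, by counting standard monomials $x_{i_1}^{(r_1)}\cdots x_{i_m}^{(r_m)}$ of fixed multidegree subject to $\sum_j r_j = m-1$.

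For part~(b), given a Novikov algebra $N$ with linear basis $B$ and structure constants $c^k_{ij}$, I would form the free differential commutative algebra $F$ on the set $B$ and quotient by the differential ideal $I$ generated by the relations $b_i\cdot d(b_j) - \sum_k c^k_{ij}\, b_k$. The composed map $N \to (F/I)^{(d)}$ is a Novikov homomorphism by construction; the content of the theorem is its injectivity. For this I would set up a Gr\"obner--Shirshov framework, ordering the standard differential monomials of $F$ and orienting the defining relations as rewrite rules, and then check that all compositions (ambiguities) reduce to zero. The identities~\eqref{LeftSym} and~\eqref{RightCom} should be precisely what is needed for confluence.

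The main obstacle in both parts is the injectivity step. In~(a) it reduces to a combinatorial dimension match, which is the substance of~\cite{Umirbayev, DzhLofwall}. In~(b) it is the confluence check: the subtlety is that the rewrite rules involve $d$ inside a commutative algebra, so one must track how iterated applications of the Leibniz rule to the defining relations interact with each other, and verify that the Novikov identities close up all critical overlaps.
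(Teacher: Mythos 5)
The paper itself offers no proof of Theorem~\ref{thm:embedding}: it is recalled from the literature with citations, so there is no internal argument to compare yours against. Your computation that $(\Com\Der\langle X\rangle,\circ)$ satisfies \eqref{LeftSym} and \eqref{RightCom} is correct (the associator is $-uv\,d^2(w)$, symmetric in $u,v$), and your overall roadmap matches the strategy of the cited sources. The problem is that both halves stop exactly where the substance begins. In part~(a), the homomorphism $\phi$ fixing $X$ has image the Novikov \emph{subalgebra generated by} $X$, which is not a priori all of $\Com\Der\langle X\rangle_{-1}$; you assert surjectivity onto the whole weight-$(-1)$ component without argument. A dimension match of multihomogeneous components cannot then deliver both surjectivity and injectivity at once --- you must establish one of them independently, and a combinatorial spanning set for $\Nov\langle X\rangle$ only bounds the dimension from above until you know its images are linearly independent. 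The standard repair (and the very device this paper uses in Lemma~\ref{partitionderivation} and Theorem~\ref{newbasenov} to certify its new basis) is a triangularity argument: well-order the standard weight-$(-1)$ monomials, produce for each a $\circ$-monomial whose image under $\tau$ is that monomial plus strictly smaller terms, and conclude simultaneously that $\phi$ is onto and that these $\circ$-monomials are linearly independent. Your sketch never commits to this step.

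In part~(b), the confluence check \emph{is} the theorem: forming $F/I$ and observing that $N\to (F/I)^{(d)}$ is a homomorphism is formal, and injectivity is precisely the statement that no new relations among the generators $b_i$ are created when the defining relations are closed under $d$ and under overlaps in the commutative-differential monomial order. You correctly locate the difficulty but do not resolve any critical pair, so nothing is proved; note also that (b) does not reduce to (a), since $N$ is an arbitrary Novikov algebra and the relevant envelope is a proper quotient of a free differential commutative algebra (the theorem's wording ``free'' should be read in that light). As a proof, the proposal is therefore an accurate plan with the two decisive verifications --- linear independence in (a), reduction of all compositions to zero in (b) --- left unexecuted; these are exactly the contents of \cite{Umirbayev,DzhLofwall} and \cite{BCZ2017,KS} respectively.
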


Let us define an order on the elements $x^{(r)}$ of $\Com\Der\langle X\rangle$ as follows:
$x_i^{(r_m)}>x_j^{(r_n)}$ if $r_m>r_n$ or $r_m=r_n$ and $i>j$.

We define a normal form of monomials $\Com\Der\langle X\rangle$ of weight $-1$ as follows,
\begin{equation}\label{goodwordnov}
x_{i_1}x_{i_2}\ldots x_{i_{l}}x_{j_n}^{(r_n)}\ldots x_{j_2}^{(r_2)}x_{j_1}^{(r_1)}x_{k_m}'\ldots x_{k_2}'x_{k_1}',
\end{equation}
where 
\begin{gather*}
n\geq1,\quad r_1,\ldots, r_n\geq2, \quad 
l=r_1+r_2+\ldots+r_n-n,\\ 
%j_{s+1}\neq j_s,\ s=1,\ldots,n-1, 
x_{i_1}\leq \ldots\leq x_{i_{l}}, \quad 
x_{j_n}^{(r_n)}\geq\ldots\geq x_{j_1}^{(r_1)},\quad x_{k_m}\geq \ldots \geq x_{k_1}.
\end{gather*}

Denote by $N(X)$ the set of all normal forms~\eqref{goodwordnov} of monomials from the standard basis of $\Com\Der\langle X\rangle$ of weight $-1$.

For $a\in N(X)$, put 
$$
L(a) = (r_1,\ldots,r_n),\quad
M(a) = (i_1,\ldots,i_l),\quad
R(a) = (k_m,\ldots,k_1),
$$
and define $S(a) = (L(a),R(a),M(a))$.
Given $a,b\in N(X)$, we say that $a<b$ if and only if $S(a)<S(b)$, we compare all tuples involved lexicographically.

Denote by $\Magma\langle X\rangle$ the free magma algebra with binary operation~$\circ$ generated by~$X$. We define a~linear map 
$\varphi\colon \Com\Der\langle X\rangle_{-1}\to \Magma\langle X\rangle$. 
By linearity it is enough to define $\varphi$ on the set $N(X)$, we do it inductively as follows,
\begin{equation} \label{ComDerPhiStart}
\varphi\big(x_{i_1}x_{i_2}\ldots x_{i_{n-1}}x_{i_n}^{(n-1)}\big)
 = x_{i_{n-1}}\circ(x_{i_{n-2}}\circ\ldots\circ(x_{i_1}\circ x_{i_n})\ldots),
\end{equation}
\begin{multline*}
\varphi\big(x_{i_1}x_{i_2}\ldots x_{i_{l}}x_{j_n}^{(r_n)}\ldots x_{j_2}^{(r_2)}x_{j_1}^{(r_1)}x_{k_m}'\ldots x_{k_2}'x_{k_1}'\big) \\
 = \varphi\big(x_{i_1}x_{i_2}\ldots x_{i_{l-r_n}}B_1x_{j_{n-1}}^{(r_{n-1})}\ldots x_{j_2}^{(r_2)}x_{j_1}^{(r_1)} x_{k_m}'\ldots x_{k_2}'x_{k_1}'\big),
\end{multline*}
where $B_1=\varphi\big(x_{i_{l+1-r_n}}x_{i_{l+2-r_n}}\ldots x_{i_l}x_{j_n}^{(r_n)}\big)$
is a new letter, so on this step we extend the generating set $X$ to $X_1 = X\cup \{B_1\}$. 
Thus, we define $\wt(B_1)=-1$ and $x<B_1$ for all $x\in X$. 
On each step~$i$, we add a new letter~$B_i$ to the generating set, i.\,e. $X_i = X_{i-1}\cup \{B_i\}$, we define $\wt(B_i)=-1$ and $y<B_i$ for all $y\in X_{i-1}$.
Calculating by the given rule, finally, we get
\begin{multline}\label{Novgeneralform}
\varphi\big(x_{i_1}x_{i_2}\ldots x_{i_{l}}x_{j_n}^{(r_n)}\ldots x_{j_2}^{(r_2)}x_{j_1}^{(r_1)}x_{k_m}'\ldots x_{k_2}'x_{k_1}'\big)\\
 = (\ldots((B_{n-1}\circ(x_{i_{r_1-1}}\circ\ldots(x_{i_1}\circ x_{j_1})\ldots))\circ x_{k_m})\ldots)\circ x_{k_1},
\end{multline}
where all previous letters $B_1,\ldots,B_{n-2}$ are inside $B_{n-1}$. 

\begin{example}
Let $x,y,z,t,q\in X$, $y>x$, $t>q$, then
$$
\varphi(xyz^{(2)}t'q')
 = \varphi(B_1 t'q')
 = \varphi(B_2 q')
 = B_2\circ q
 = (B_1\circ t)\circ q
 = ((y\circ (x\circ z))\circ t)\circ q.
$$
\end{example}

%We denote the image~$\varphi(a)$ by $[a]$.
Define a homomorphism 
$$
\tau\colon \Magma\langle X\rangle\to \Com\Der\langle X\rangle_{-1} 
$$
by the formula 
$\tau(x) = x$, $x\in X$; the last algebra is considered as a~Novikov one.
For example, if $x,y,z\in X$, then
$\tau(x\circ (y\circ z)) = x(yz')' = xy'z' + xyz^{(2)}$.

\begin{lemma} \label{partitionderivation}
Let $a\in N(X)$. Then $\tau(\varphi(a)) = a + \sum_j b_j$, where $b_j<a$ for all~$j$.
\end{lemma}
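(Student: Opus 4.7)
The plan is to analyse the tree $\varphi(a)\in\Magma\langle X\rangle$ directly, after all $B_i$'s are unfolded. The guiding observation is that $\tau$ is a homomorphism with $\tau(u\circ v)=\tau(u)\,d(\tau(v))$, so each $\circ$-node of $\varphi(a)$ contributes exactly one derivative when $\tau$ is applied, and this derivative can be placed on any leaf of the right subtree of that $\circ$-node. Thus $\tau(\varphi(a))$ is a sum of commutative monomials indexed by admissible placements, and it suffices to exhibit $a$ as the unique ``canonical'' placement and to show every other placement yields a monomial strictly less than $a$ in the order on $N(X)$.

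The base case $n=1$ reduces to a single right-comb $C=x_{i_{r_1}}\circ(x_{i_{r_1-1}}\circ\cdots\circ(x_{i_1}\circ x_{j_1}))$, since the outer factors $\circ\, x_{k_t}$ merely multiply by $x_{k_t}'$. An induction on $r_1$ using $\tau(y\circ v)=y\,d(\tau(v))$ shows that $\tau(C)=x_{i_1}\cdots x_{i_{r_1}}\,x_{j_1}^{(r_1)}+\sum_i p_i$, where every $p_i$ has every letter carrying at most $r_1-1$ derivatives: the plain letter $x_{i_s}$ sits beneath only $r_1-s$ outer $\circ$-nodes and so absorbs at most $r_1-s$ derivatives, while the canonical placement is the unique one pushing all $r_1$ derivatives onto $x_{j_1}$. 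Hence for each $p_i$ the leading entry of $L$ drops below $r_1$, and so $p_i<a$.

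For $n\ge 2$, I would use the recursive construction of $\varphi$ to partition the $\circ$-nodes of the fully unfolded $\varphi(a)$ into $n$ disjoint ``groups'', one per index $s$: group $s$ is the set of $r_s$ $\circ$-nodes inside the right-comb that introduces $x_{j_s}$, and a derivative assigned to a node of group $s$ is constrained to land on one of the $r_s$ accessible leaves of that right-comb --- namely $x_{j_s}$ together with the $r_s-1$ plain $X$-letters lying inside it. Applying the right-comb lemma of the base case inside each group gives: (i) the canonical placement (all $r_s$ derivatives of group $s$ onto $x_{j_s}$, and each $\circ\, x_{k_t}$ derivative onto $x_{k_t}$) is the unique placement reaching derivative order $r_s$ inside group $s$, and it yields $a$ with coefficient one; (ii) any non-canonical placement in group $s$ contributes to $L$ only values strictly below $r_s$.

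For an arbitrary non-canonical placement with non-empty set $S$ of non-canonical groups, the multiset $L(b)$ is obtained from $L(a)$ by, for each $s\in S$, removing the entry $r_s$ and inserting values all bounded above by $r_s-1$. Processing $S$ in decreasing order of $r_s$ and using the elementary fact that such a replacement strictly decreases the sorted $L$-tuple in the lex order of $N(X)$, one obtains $L(b)<L(a)$ and hence $b<a$. The main obstacle is the bookkeeping of the group decomposition across the nested $B_i$'s; once that is set up, the rest reduces to the multiset-swap principle and the base-case right-comb computation.
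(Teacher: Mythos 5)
Your argument is correct and reaches the conclusion by the same underlying mechanism as the paper --- the Leibniz expansion of $\tau$ applied to $\varphi(a)$, with $a$ arising as the unique placement that pushes every derivative of the comb over $x_{j_s}$ onto $x_{j_s}$ and every $\circ\,x_{k_t}$-derivative onto $x_{k_t}$ --- but you organize it differently. The paper peels off one Leibniz application at a time: it writes $\tau(\varphi(a))=\tau(B_{n-1})\bigl(\tau(x_{i_{r_1-1}}\circ(\cdots(x_{i_1}\circ x_{j_1})\cdots))\bigr)'$, extracts the leading term $x_{i_1}\cdots x_{i_{r_1-1}}x_{j_1}^{(r_1)}$ plus terms carrying $x_{j_1}^{(p)}$ with $p<r_1$, and then says ``analogously, we deal with $\tau(B_{n-1})$ and so on,'' leaving the comparison across different groups implicit in the recursion. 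You instead unfold all the $B_i$ at once, partition the $\circ$-nodes into the $n$ groups attached to $x_{j_1},\dots,x_{j_n}$, observe that the groups' derivative destinations are disjoint because each $B_i$ only ever occurs as a left factor, and reduce the ordering claim to a multiset-swap statement on $L$; this makes explicit exactly the bookkeeping the paper glosses over. One point you must pin down: your ``elementary fact'' that replacing the entry $r_s$ by entries all bounded by $r_s-1$ strictly decreases the $L$-tuple is valid only if $L$ is read as a \emph{decreasingly} sorted tuple with a proper prefix counted as larger than its extensions (consistent with your ``processing $S$ in decreasing order of $r_s$''), and it must cover the case where a derivative order drops to $1$ or $0$ so that the entry leaves $L$ altogether and the tuple shortens; the paper writes $L(a)=(r_1,\dots,r_n)$ in increasing order, for which a naive lexicographic comparison would give, e.g., $(5)>(2,5)$ and the claim would fail. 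This is a defect of the order as stated in the paper rather than of your argument --- the paper's own proof leans on the same unproved comparison --- but your proof inherits it and should state the convention explicitly.
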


\begin{proof}
By the definition of~$\tau$, it is enough to prove the statement for 
$$
a=x_{i_1}x_{i_2}\ldots x_{i_{l}}x_{j_n}^{(r_n)}\ldots x_{j_2}^{(r_2)}x_{j_1}^{(r_1)}.
$$
By the Leibniz rule fulfilled for~$d$, we have
\begin{multline*}
\tau\big(\varphi\big(x_{i_1}x_{i_2}\ldots x_{i_{l}}x_{j_n}^{(r_n)}\ldots x_{j_2}^{(r_2)}x_{j_1}^{(r_1)}\big)\big)
 = \tau(B_{n-1}\circ(x_{i_{r_1-1}}\circ(\ldots\circ(x_{i_1}\circ x_{j_1}))\ldots)) \\
 = \tau(B_{n-1})(\tau(x_{i_{r_1-1}}\circ(\ldots\circ(x_{i_1}\circ x_{j_1})\ldots)))' \\
 = \tau(B_{n-1})x_{i_1}\ldots x_{i_{r_1-1}}x_{j_1}^{(r_1)}
 + \sum_{p<r_1} \tau(B_{n-1}) \ldots x_{j_1}^{(p)},
\end{multline*}
and all summands are less than 
$x_{i_1}\ldots x_{i_{r_1-1}}\tau(B_{n-1})x_{j_1}^{(r_1)}$ due to the above defined order on~$N(X)$. Analogously, we deal with $\tau(B_{n-1})$ and so on.
\end{proof}

Define $N_\varphi = \{\varphi(a) \mid a\in N(X)\}$.
%Under the map $\phi$ for each monomial of $\Com\Der\<X\>$ of weight $-1$ correspond a unique monomial. Let us denote by $[N]$ the set of all such monomials obtained under the mapping $\phi$.

\begin{theorem}\label{newbasenov}
The set $N_\varphi$ forms a~basis of the free Novikov algebra~$\Nov\langle X\rangle$.
\end{theorem}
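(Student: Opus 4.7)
The strategy is to transport the claim to $\Com\Der\langle X\rangle_{-1}$ via Theorem~\ref{thm:embedding}(a), where it becomes a triangularity statement about a change of basis.

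First, I would factor $\tau\colon \Magma\langle X\rangle\to \Com\Der\langle X\rangle_{-1}$ through the canonical projection $\pi\colon \Magma\langle X\rangle\to \Nov\langle X\rangle$. The universal property of $\Nov\langle X\rangle$ produces a unique Novikov homomorphism $\bar\tau\colon \Nov\langle X\rangle\to \Com\Der\langle X\rangle_{-1}$ sending each $x\in X$ to itself and satisfying $\tau=\bar\tau\circ\pi$; by Theorem~\ref{thm:embedding}(a), $\bar\tau$ is an isomorphism. Regarding each $\varphi(a)$ as an element of $\Nov\langle X\rangle$ via $\pi$, the claim reduces to showing that $\{\bar\tau(\varphi(a))\}=\{\tau(\varphi(a))\}$ is a basis of $\Com\Der\langle X\rangle_{-1}$, since $N(X)$ itself is already the standard basis of $\Com\Der\langle X\rangle_{-1}$.

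Next, I would invoke Lemma~\ref{partitionderivation}: for every $a\in N(X)$, we have $\tau(\varphi(a))=a+\sum_j b_j$ with $b_j<a$ in the order on $N(X)$. Thus, on the ordered basis $N(X)$, the assignment $a\mapsto\tau(\varphi(a))$ is unitriangular. For linear independence, suppose $\sum c_a\tau(\varphi(a))=0$ is a finite vanishing combination, take the maximal $a_0$ with $c_{a_0}\neq 0$, and note that the coefficient of $a_0$ in the expansion equals $c_{a_0}$ (any correction term $b_{a,j}$ satisfies $b_{a,j}<a\leq a_0$, hence cannot equal $a_0$), forcing $c_{a_0}=0$, a contradiction. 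For spanning, given $v\in\Com\Der\langle X\rangle_{-1}$ expressed in the basis $N(X)$, subtract $c_{a_0}\tau(\varphi(a_0))$ at the maximal surviving index $a_0$ and iterate; the maximal surviving index drops strictly at each step, so the elimination terminates and expresses $v$ as a linear combination of the $\tau(\varphi(a))$.

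I do not anticipate any serious obstacle: Lemma~\ref{partitionderivation} already carries the substantive content, Theorem~\ref{thm:embedding}(a) supplies the required isomorphism, and the remainder is routine triangular linear algebra. The only point to verify is that the lexicographic order on the tuples $S(a)=(L(a),R(a),M(a))$ restricts to a linear order on any finite subfamily of $N(X)$, so that the maximum in each elimination step is well-defined; this is immediate from the definition.
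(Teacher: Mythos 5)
Your proof is correct and follows essentially the same route as the paper: identify $\Nov\langle X\rangle$ with $\Com\Der\langle X\rangle_{-1}$ via Theorem~\ref{thm:embedding}(a) and use the unitriangularity $\tau(\varphi(a))=a+\sum_j b_j$, $b_j<a$, from Lemma~\ref{partitionderivation} to conclude that $\{\tau(\varphi(a))\mid a\in N(X)\}$ is a basis (the paper phrases the spanning step as a minimal-counterexample argument rather than downward elimination, which is the same well-foundedness fact). The one point both arguments leave implicit is why the descent terminates: all correction terms $b_j$ have the same multidegree in $X$ as $a$, so only finitely many elements of $N(X)$ can ever occur, and no infinite strictly decreasing chain arises.
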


\begin{proof}
By Theorem~\ref{thm:embedding}a, we identify $\Nov\langle X\rangle$ with the Novikov algebra $\Com\Der\langle X\rangle_{-1}$.
Let $L$ be a linear span of $N_\varphi$ in $\Magma\langle X\rangle$.
We want to show that $\tau$ is an isomorphism of~$L$ and $\Nov\langle X\rangle$ considered as vector spaces.

By Lemma~\ref{partitionderivation}, we have that $\tau(\varphi(a)) = a + \sum_j b_j$ with $b_j<a$ for every $a\in N(X)$. Thus, we derive that $\tau(N_\varphi)$ is linearly independent.

Suppose that $\tau(N_\varphi)$ is not complete, so, the set $M = \{a\in N(X)\mid a$ is not expressed through $\tau(N_\varphi)\}$ is not empty. 
Choose a minimal $a\in M$ due to the order~$<$, 
such element exists, since the set of tuples $S(a)$ is well-ordered.
We have $a - \tau([a]) = \sum_j b_j$. 
By the assumption, all $b_j$ are expressed via $\tau(N_\varphi)$, so, $a$ is expressed too, a~contradiction.

So, $\tau\colon L\to \Nov\langle X\rangle$ is an isomorphism of vector spaces.
Thus, we may define the product on~$L$ by the formula
$n\circ m = \tau^{-1}(\tau(n)\tau(m)')$, where $n,m\in N_\varphi$.
Since $\tau$~is also a~homomorphism between algebras $L$ and $\Nov\langle X\rangle$, we have proved the statement.
\end{proof}

Let $n$ be a positive integer. We consider Young diagrams corresponding to the partitions 
$$
\lambda_1+\ldots+\lambda_k=n,\quad \lambda_1>\lambda_2\geq\ldots\geq \lambda_k\geq 1.
$$
We fill the Young diagrams by elements of $X$:
$$
\ytableausetup{mathmode, boxsize=2.7 em}
\begin{ytableau}
x_{i_{{1}_1}} & x_{i_{{1}_{2}}} & \dots & x_{i_{{1}_{\lambda_1-1}}} & x_{t_{1}} \\
\vdots \\
x_{i_{{r}_1}} & \dots & x_{i_{{r}_{\lambda_r-1}}} & x_{t_{r}} \\
x_{t_{r+1}}\\
\vdots \\
x_{t_{r+p}}\\
\end{ytableau}
$$
Here   
\begin{gather*}
i_{{1}_{\lambda_1-1}}\geq\ldots\geq i_{{1}_1} \geq\ldots\geq i_{{r}_{\lambda_r-1}}\geq\ldots\geq i_{{r}_1},\;\; t_{r+1}\geq\ldots\geq t_{r+p}, \\
t_{1}\geq t_{2}\;\textrm{if}\;\lambda_1=\lambda_{2}+1,\; \textrm{and}   \;t_{s}\geq t_{s+1}\; \textrm{if} \; \lambda_s=\lambda_{s+1}\; \textrm{for} \; s=2,\ldots,r-1.
\end{gather*}

For the diagram with exactly one row 
$(x_{i_{{1}_1}},x_{i_{{1}_{2}}},\dots,x_{i_{{1}_{\lambda_1-1}}},x_{t_{1}})$
we attach a monomial of $\Nov\langle X\rangle$ as follows,
$$
%\ytableausetup{mathmode, boxsize=2.8 em}
%\begin{ytableau}
%x_{i_{{1}_1}} & x_{i_{{1}_{2}}} & \dots & x_{i_{{1}_{\lambda_1-1}}} & x_{t_{1}} \\
%\end{ytableau}\longrightarrow    
 u_1:=x_{i_{{1}_{\lambda_1-1}}}\circ (\ldots  \circ(x_{i_{{1}_{2}}}\circ(x_{i_{{1}_1}}\circ x_{t_{1}}))\ldots).
$$
%$$
%\ytableausetup{mathmode, boxsize=2.6 em}
%\begin{ytableau}
%x_{i_{{1}_1}} & x_{i_{{1}_{2}}} & \dots & x_{i_{{1}_{\lambda_1-1}}} & x_{t_{1}} \\
%\end{ytableau}\longrightarrow    
% u_1:=x_{i_{{1}_1}}\circ (x_{i_{{1}_{2}}}\circ( \ldots \circ %(x_{i_{{1}_{\lambda_1-1}}}\circ x_{t_{1}})\ldots)).
%$$

For the diagram with $m$~rows we attach a~monomial of $\Nov\langle X\rangle$ inductively,
$$
m\textrm{-th row}\longrightarrow u_{m}
 :=u_{m-1}\circ (x_{i_{{m}_{\lambda_m-1}}}\circ(\ldots \circ (x_{i_{{m}_2}}\circ(x_{i_{m_1}}\circ x_{t_{m}}))\ldots)).
$$

%For the diagram with $m$~rows we correspond a~monomial of $\Nov\langle X\rangle$ inductively:
%$$
%m\textrm{-th row}\longrightarrow u_{m}:=x_{i_{m_1}}\circ %(x_{i_{{m}_2}}\circ(\ldots \circ (x_{i_{{m}_{\lambda_m-1}}}\circ(u_{m-1}\circ x_{t_{m}}))\ldots)).
%$$

The set of the constructed Young diagrams with the corresponding monomials coincides with the set $N_\varphi$.

\section{Normal form of monomials of weight~$-1$ in $\Pois\Der\langle X\rangle$}

Let $Y$ be a well-ordered set with respect to an order
$<$, and let $Y^*$ be the set of all associative words
in the alphabet $Y$ (including the empty word denoting by 1).
Extend the order to $Y^*$ by induction on the word length as follows.
Put $u<1$ for every nonempty word $u$. Further,
$u < v$ for $u = y_i u'$, $v = y_jv'$, $y_i, y_j\in Y$
if either $y_i < y_j$ or $y_i = y_j$, $u'< v'$.
In particular, the beginning of every word is greater than the whole word.

{\it Definition 2}.
A word $w\in Y^*$ is called an associative Lyndon---Shirshov word if
for arbitrary nonempty $u$ and $v$ such that $w=uv$,
we have $w>vu$.

For example, a word $aabac$ is an associative Lyndon---Shirshov word when $a>b>c$.

Consider the set $Y^+$ of all nonassociative words in $Y$,
here we exclude the empty word from consideration.

{\it Definition 3}.
A nonassociative word $[u]\in Y^+$ is called a nonassociative
Lyndon---Shirshov word (an LS-word, for short) provided that

(LS1) the associative word $u$ obtained from $[u]$
by eliminating all parentheses is an associative Lyndon---Shirshov word;

(LS2) if $[u] = [[u_1],[u_2]]$, then
$[u_1]$ and $[u_2]$ are LS-words, and $u_1> u_2$;

(LS3) if $[u_1] = [[u_{11}],[u_{12}]]$, then $u_2\geq u_{12}$.

These words appeared independently for the algebras and groups~\cite{Lyndon1958,Shirshov1958}.
In~\cite{Shirshov1958}, it was proved that the set of all
LS-words in the alphabet $Y$ is a~linear basis for a~free Lie
algebra generated by $Y$. 
Moreover, each associative Lyndon---Shirshov word~$w$ possesses the unique arrangement of parentheses which gives an LS-word $[w]$.

We consider the free Poisson algebra $\Pois\langle X\rangle$ generated by~$X$.
Here we denote the operations by~$x\cdot y$ and $\{x,y\}$.
Since $\Pois\langle X\rangle = \Com(\Lie \langle X\rangle)$, the set of commutative words
\begin{equation} \label{PoisFreeBasisForm}
A_1A_2\ldots A_n,\quad A_1\leq \ldots\leq A_n,
\end{equation}
where $A_i$ are Lyndon---Shirshov words in $\Lie\langle X\rangle$ forms a~standard basis of $\Pois\langle X\rangle$. 
By~\cite{KSO}, for the free Poisson algebra generated by a~set~$X$ with a derivation~$d$, we have the equality
$\Pois\Der\langle X\rangle = \Pois\langle X_\infty\rangle$, where
$X_\infty = \big\{x_i^{(n)}\mid i\in I,\,n\in\mathbb{N}\big\}$.

Define an order on $X_\infty$ as follows:
$x_i^{(m)}>x_j^{(n)}$ if $m>n$ or $m=n$, $i>j$.

We define an order on Lyndon---Shirshov words forming the basis in $\Lie\langle X_\infty\rangle$ as follows. At first, we compare two Lie words $A_1$ and $A_2$ by degree, i.\,e., $A_1>A_2$ if $\deg A_1>\deg A_2$. If $\deg A_1 = \deg A_2$, then we compare corresponding associative Lyndon---Shirshov words as it was defined above. 

Also, we define $A>x_{k}^{(m)}>B$, where $A$ and $B$ are LS-words on $X_\infty$ of degree at least two and $A$ but not $B$ involves~$d$ in its notation. 

Recall the definition of the weight function~\cite[Definition\,2]{KSO} on basic monomials~\eqref{PoisFreeBasisForm} with Lie words taken from $H(X,d)$ of~$\Pois\Der\langle X\rangle$,
\begin{gather*}
\wt(x)=-1,\quad x\in X; \\
\wt(d(u)) = \wt(u)+1; \
\wt(\{u,v\})=\wt(u)+\wt(v)+1; \
\wt(uv)=\wt(u)+\wt(v).
\end{gather*}

Due to~\cite{KSO}, we have
$(\Pois\Der\langle X\rangle_{-1},\circ,[,])\cong \SGD\langle X\rangle$, and the linear map 
$\xi\colon \Pois\Der\langle X\rangle_{-1}\to \SGD\langle X\rangle$ defined by the formulas $\xi(a\circ b)=ab'$, $\xi([a,b])=\{a,b\}$ provides the isomorphism.

Let us define a canonical form of monomials $\Pois\Der\langle X\rangle$ of weight $-1$ as follows:
\begin{equation}\label{poisgoodword}
x_{i_1}\ldots x_{i_k}B_1\ldots B_m A_n\ldots A_1 x_{j_l}^{(r_l)}\ldots x_{j_1}^{(r_1)},
\end{equation}
where $A_i,B_j$ are Lie-words of degree at least~2 and $A_i$ but not $B_j$ involves~$d$ in its notation, moreover,
\begin{gather*}
A_1\leq A_2\leq\ldots \leq A_n, \quad 
B_m\geq B_{m-1}\geq\ldots\geq B_1, \\
x_{i_k}\geq x_{i_{k-1}}\geq \ldots\geq x_{i_1}, \quad 
x_{j_l}^{(r_l)}\geq x_{j_{l-1}}^{(r_{l-1})}\geq \ldots \geq x_{j_1}^{(r_1)}.
\end{gather*}

Denote by $N(X)$ the set of all normal forms~\eqref{poisgoodword} of monomials from the standard basis of $\Pois\Der\langle X\rangle$ of weight $-1$.

Denote by $\Magma_2\langle X\rangle$ the free algebra with two binary (magma) operations~$\circ$ and~$[,]$ generated by~$X$. We define a~linear map 
$\psi\colon \Pois\Der\langle X\rangle_{-1}\to \Magma_2\langle X\rangle$ 
by induction.

At first, we consider a Lie LS-word corresponding to an associative LS-word 
$w = x_{i_1}^{(r_1)}\ldots x_{i_t}^{(r_t)}$ with $k = r_1+\ldots+r_t \geq 1$.
Let $\pi\in S_t$ be a permutation acting on the letters of $w\in X_\infty^*$ such that 
in the associative word $w^{\pi} = x_{j_1}^{(p_1)}\ldots x_{j_t}^{(p_t)}$
we have $x_{j_m}^{(p_m)}\geq x_{j_{m+1}}^{(p_{m+1})}$ for $m=1,\ldots,t-1$.

Given $c_k\geq \ldots\geq c_1$ such that $\wt(c_i)=-1$ 
(here by $c_i$ we mean either $x\in X$ or a Lie LS-word in $X$), we put
$$
\psi(c_{1}\ldots c_{k}[w])\\
 = [u],
 %x_{i_1},[\ldots [c_{k}\circ(\ldots\circ(c_2\circ(c_1\circ x_{i_s}))),[\ldots[x_{i_{n-1}},x_{i_n}]\ldots]]\ldots]],
$$
where the corresponding associative LS-word $u = v^\pi$ 
and the word $v = v_1 \ldots v_t$ is defined as follows,
$$
v_m = G(c_1,\ldots,c_k,w)_m 
 := \varphi\big(c_{k-p_1-\ldots-p_m+1}\ldots c_{k-p_1-\ldots-p_{m-1}} x_{j_m}^{(p_m)}\big), 
$$
the map~$\varphi$ is defined by~\eqref{ComDerPhiStart}. 
Here, we add new generators 
$G(c_1,\ldots,c_k,w)_m$ to $X$ for all $p_m\geq1$ to get the supset~$\widetilde{X}$. 
We compare new generators by the length in terms on the~$\circ$ operation 
and then after elimination of all signs of the~$\circ$ operation 
we compare them as associative words.
By this rule, all new letters are greater than elements from~$X$.
Also, we put $\wt(G(c_1,\ldots,c_k,w)_m) = -1$.

Let $u$ be a word of the form~\eqref{poisgoodword}. We define $\psi(u)$ as follows,
$$
\psi(u)
 = \psi\big(c_1\ldots c_m A_n \ldots A_1 x_{j_l}^{(r_l)}\ldots x_{j_1}^{(r_1)}\big) 
 = \psi\big(c_1\ldots c_{p_{n-1}} \tilde{A}_n A_{n-1}\ldots A_1 x_{j_l}^{(r_l)}\ldots x_{j_1}^{(r_1)}\big),
$$
where $\wt(c_{p_{n-1}+1}\ldots c_{m} A_n)=-1$, 
$\tilde{A}_n=\psi(c_{p_{n-1}+1}\ldots c_{m}\{A_n\})$ 
is a~letter of the extended alphabet~$\widetilde{X}$.

Thus, by $n$ such steps we exclude all Lie words of degree at least two which involve $d$ in their notation and afterwards apply the map~$\varphi$ from Sec.~2:
\begin{multline}\label{psiAction}
\psi(u)
 = \psi \big(c_1 \ldots c_{p_{n-2}}\tilde{A}_{n-1}A_{n-2}\ldots A_1 x_{j_l}^{(r_l)}\ldots x_{j_1}^{(r_1)}\big)
 = \ldots \\
 = \psi \big(c_1 \ldots c_{p_2}\tilde{A}_2A_1x_{j_l}^{(r_l)}\ldots x_{j_1}^{(r_1)}\big)
 = \varphi \big(c_1 \ldots c_{p_1}\tilde{A}_1x_{j_l}^{(r_l)}\ldots x_{j_1}^{(r_1)}\big). 
\end{multline}

\begin{example}
We have 
\[
\psi(x_3x_4x_5\{x_1',x_2''\})
 = [x_3\circ x_1,x_5\circ(x_4\circ x_2)],
\]
\begin{multline*}
\psi(x_5x_6x_7x_8\{x_3',x_4''\}\{x_1,x_2'\}x_9'')
 = \psi(x_5[x_6\circ x_3,x_8\circ(x_7\circ x_4)]\{x_1,x_2'\}x_9'') \\ 
 = \psi(x_5[x_1,[x_6\circ x_3,x_8\circ(x_7\circ x_4)]\circ x_2]x_9'')
 = [x_1,[x_6\circ x_3,x_8\circ(x_7\circ x_4)]\circ x_2]\circ(x_5\circ x_9).
\end{multline*}
\end{example}

\section{Basis of free $\SGD$-algebra}

Given $a,b\in N(X)$, we compare them as elements from $\Com\Der\langle X\rangle$ (see Sec.~2).

Define a homomorphism 
$$
\tau\colon \Magma_2\langle X\rangle\to \Pois\Der\langle X\rangle_{-1}
$$
by the formula 
$\tau(x) = x$, $x\in X$.

\begin{lemma}\label{partitionLieword}
Let $a\in N(X)$. Then $\tau(\psi(a)) = a + \sum_j b_j$, where $b_j<a$ for all~$j$.
\end{lemma}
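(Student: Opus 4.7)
The plan is to mirror the strategy of Lemma~\ref{partitionderivation} and proceed by induction on $n$, the number of Lie LS-words $A_1,\ldots,A_n$ involving $d$ in the canonical form of $a$, invoking Lemma~\ref{partitionderivation} itself as the key tool.

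For the base case $n=0$, the canonical form reads
\[
a=x_{i_1}\ldots x_{i_k}B_1\ldots B_m x_{j_l}^{(r_l)}\ldots x_{j_1}^{(r_1)}.
\]
Adjoin each $B_j$ to the alphabet as a weight-$-1$ letter; then by~\eqref{psiAction} the map $\psi$ coincides with $\varphi$ of Section~\ref{basisofNov} applied on the extended alphabet. Since $\tau$ carries the magma bracket to the Poisson bracket, it sends each $B_j$ back to the Poisson Lie word it originated from, and Lemma~\ref{partitionderivation} directly gives $\tau(\psi(a))=a+\sum_j b_j$ with $b_j<a$.

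For the inductive step, apply one recursion of $\psi$ to strip off the outermost $d$-involving Lie word $A_n$:
\[
\psi(a)=\psi\bigl(c_1\ldots c_{p_{n-1}}\tilde A_n A_{n-1}\ldots A_1 x_{j_l}^{(r_l)}\ldots x_{j_1}^{(r_1)}\bigr),\quad \tilde A_n=\psi\bigl(c_{p_{n-1}+1}\ldots c_m\{A_n\}\bigr).
\]
Treating $\tilde A_n$ as a new weight-$-1$ letter reduces the ambient monomial to one with only $n-1$ Lie words involving $d$, so the inductive hypothesis yields
\[
\tau(\psi(a))=c_1\ldots c_{p_{n-1}}\tau(\tilde A_n)A_{n-1}\ldots A_1 x_{j_l}^{(r_l)}\ldots x_{j_1}^{(r_1)}+\sum b_j',
\]
with every $b_j'$ strictly below the leading term. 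It remains to analyse $\tau(\tilde A_n)$: by construction $\tilde A_n$ is a Lie-LS bracketing of letters $v_m=\varphi(c_{\ast}\ldots c_{\ast}x_{j_m}^{(p_m)})$, so since $\tau$ intertwines the magma bracket with the Poisson bracket, $\tau(\tilde A_n)$ is the same bracket pattern applied to the elements $\tau(v_m)$. Lemma~\ref{partitionderivation} applied inside each $v_m$ gives $\tau(v_m)=c_{\ast}\ldots c_{\ast}x_{j_m}^{(p_m)}+(\text{lower})$, and expanding Poisson bilinearity reproduces the target Lie word $c_{p_{n-1}+1}\ldots c_m A_n$ in its canonical slot plus corrections from the inner ``lower'' terms.

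The main obstacle is the bookkeeping in this last expansion. After substituting $\tau(\tilde A_n)$ back into the ambient commutative product and rewriting everything in the form~\eqref{poisgoodword} via the Leibniz axiom $\{u,v\cdot w\}=\{u,v\}\cdot w+v\cdot\{u,w\}$, one must check that no new summand dominates $a$ in the order on $N(X)$ inherited from $\Com\Der\langle X\rangle$. This reduces to two points: (i) the ``identity'' part of the expansion rebuilds $a$ exactly, which is precisely the combinatorial content built into the design of $\psi$ (the permutation $\pi$ and the nested $\varphi$-letters are arranged so the commutative factors return to their prescribed positions); and (ii) every other summand lowers the tuple $S(\cdot)=(L,R,M)$—either pushing a derivative to an earlier index, shortening the tail $R$, or relocating a letter from the $M$-block into a smaller position—hence is strictly smaller than $a$. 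Both facts transcribe verbatim the ordering argument that powered the proof of Lemma~\ref{partitionderivation}.
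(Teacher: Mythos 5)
Your proposal is correct and follows essentially the same route as the paper's proof: unwind $\psi$ down to $\varphi$, invoke Lemma~\ref{partitionderivation} for the Novikov parts, handle each $\tilde A_i$ by applying Lemma~\ref{partitionderivation} inside every bracket slot, and use the Leibniz rule to extract the commutative factors while checking that all correction terms drop in the order on $N(X)$. Your explicit induction on the number $n$ of $d$-involving Lie words is only a cosmetic reorganization of the paper's own inductive unwinding, and your level of detail on the final ordering check matches the paper's.
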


\begin{proof}
Let $u = c_1\ldots c_m A_n \ldots A_1 x_{j_l}^{(r_l)}\ldots x_{j_1}^{(r_1)}\in N(X)$.
By the definition of~$\psi$ and $\varphi$, we have (see also~\eqref{psiAction})
\begin{multline*}
\psi(u)
 = \varphi \big(c_1 \ldots c_{q_1}\tilde{A}_1x_{j_l}^{(r_l)}\ldots x_{j_1}^{(r_1)}\big)
 = \varphi \big(c_1 \ldots c_{q_2}B_1 x_{j_{l-1}}^{(r_{l-1})}\ldots x_{j_1}^{(r_1)}\big)
 = \ldots \\
 = \varphi \big(c_1 \ldots c_{q_l}B_{l-1}x_{j_1}^{(r_1)}\big)
 = B_{l-1}\circ (c_{q_l}\circ(\ldots (c_1\circ x_{j_1})\ldots)).
\end{multline*}
Applying~$\tau$, we  get
\begin{equation} \label{Lemma2Induction}
\tau(\psi(u))
 = \tau(B_{l-1})(\tau( c_{q_l}\circ(\ldots (c_1\circ x_{j_1})\ldots) ))'.
\end{equation} 
By Lemma~1, 
$$
\tau( c_{q_l}\circ(\ldots (c_1\circ x_{j_1})\ldots) )
 = c_1 \ldots c_{q_l}x_{j_1}^{(r_1-1)} + \sum_{j} b_j, 
$$
where $b_j<c_1 \ldots c_{q_l}x_{j_1}^{(r_1-1)}$.
Writing down~$\tau(B_{l-1})$, we get the analogous expression for it as the right-hand side of~\eqref{Lemma2Induction}. 
By this remark and by the induction reasons, it is enough to figure out with~$\tau(\tilde{A}_1)$.
Let $A_1 = [v_1\ldots v_k]$, where $v_i = x_{j_i}^{(t_i)}$.
We have by Lemma~1,
$$
\tau(\tilde{A}_1)
 = \big[\tau(\tilde{A}_2)c_{l_1}\ldots c_{l_{t_1}} x_{j_1}^{(t_1)} + d_1,\ldots,c_{r_1}\ldots c_{r_{t_k}}x_{j_k}^{(t_k)} + d_k\big],
$$
where $d_i$ are less than the corresponding leading terms.
Extracting by the Leibniz rule $\tau(\tilde{A}_2)$ from the described Lie word as well as all $c_i$, we get the~$a$ and the sum of words less than~$a$ due to the defined order on $N(X)$.
% maybe, write it down more explicitely ?!
\end{proof}

\begin{example}
%If $u=x_3[x_1,x_2']x_4'$, then
%$$
%\tau(\psi(u))
% = \tau([x_1,x_3\circ x_2]\circ x_4)
% = \{x_1,x_3x_2'\}x_4'
% = x_3\{x_1,x_2'\}x_4' + \{x_1,x_3\}x_4'x_2'.
%$$

If $u=x_3x_4\{x_1',x_2'\}$, then 
\begin{multline*}
\tau(\psi(x_3x_4\{x_1',x_2'\}))
 = \tau([x_3\circ x_1,x_4\circ x_2]) \\
 = \{x_3x_1',x_4x_2'\}
 = x_3x_4\{x_1',x_2'\} + \{x_3,x_4\}x_2'x_1' 
 + x_4\{x_3,x_2'\}x_1' + x_3\{x_1',x_4\}x_2'.
\end{multline*}
\end{example}

Define $N_\psi = \{\psi(a) \mid a\in N(X)\}$.

\begin{theorem}\label{basis[S]}
The set $N_\psi$ forms a~basis of the free $\SGD$-algebra~$\SGD\langle X\rangle$.
\end{theorem}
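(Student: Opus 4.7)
The plan is to imitate the proof of Theorem~\ref{newbasenov}, replacing Lemma~\ref{partitionderivation} by Lemma~\ref{partitionLieword} and Theorem~\ref{thm:embedding}a by the isomorphism $\xi\colon \Pois\Der\langle X\rangle_{-1}\to \SGD\langle X\rangle$ recalled from~\cite{KSO}. Under this identification, it suffices to show that $\tau(N_\psi)$ is a linear basis of $\Pois\Der\langle X\rangle_{-1}$; the $\SGD$-structure can then be transported back to the linear span $L$ of $N_\psi$ inside $\Magma_2\langle X\rangle$ through $\tau$.

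First I would establish that the restriction $\tau\colon L\to \Pois\Der\langle X\rangle_{-1}$ is a vector space isomorphism. Linear independence of $\tau(N_\psi)$ is immediate from Lemma~\ref{partitionLieword}: the equality $\tau(\psi(a))=a+\sum_j b_j$ with $b_j<a$ assigns each element of $\tau(N_\psi)$ a distinct leading monomial from $N(X)$, so no nontrivial linear combination can vanish. For surjectivity, note that $N(X)$ spans $\Pois\Der\langle X\rangle_{-1}$ and that the order~$<$ on $N(X)$ (inherited from the lexicographic order on the triples $S(a)$ defined in Section~2, now extended so that Lie LS-words play the role of the new letters $B_i$) is a well-order. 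If the set $M=\{a\in N(X)\mid a\notin \tau(L)\}$ were nonempty, I would pick the minimal $a\in M$, apply Lemma~\ref{partitionLieword} to write $a=\tau(\psi(a))-\sum_j b_j$ with every $b_j<a$ already in $\tau(L)$ by minimality, and conclude $a\in \tau(L)$, a contradiction.

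Once $\tau|_L$ is known to be a bijection, I transport the algebra structure by declaring $m\circ n=\tau^{-1}(\tau(m)\,\tau(n)')$ and $[m,n]=\tau^{-1}(\{\tau(m),\tau(n)\})$ for $m,n\in L$. These operations promote $\tau|_L$ to an isomorphism of $\SGD$-algebras, so $L\cong \Pois\Der\langle X\rangle_{-1}\cong \SGD\langle X\rangle$ and $N_\psi$ is a basis of $\SGD\langle X\rangle$. The heavy lifting is Lemma~\ref{partitionLieword}, which already supplies the triangularity; the only delicate bookkeeping in the present theorem is checking that the order on $N(X)$—now built on Lie LS-words involving $d$ as well as on plain iterated derivatives—is a genuine well-order, so that the minimality step is legitimate.
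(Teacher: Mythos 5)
Your proposal is correct and follows essentially the same route as the paper: the paper's own proof of this theorem is literally "analogously to the proof of Theorem~\ref{newbasenov}, applying Lemma~\ref{partitionLieword}," which is exactly the triangularity-plus-minimal-counterexample argument you spell out, followed by transporting the operations through $\tau^{-1}$. Your explicit attention to the well-ordering of $N(X)$ in the Poisson setting is a reasonable extra check that the paper leaves implicit.
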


\begin{proof}
Analogously to the proof of Theorem~2, where we apply Lemma~2.
\end{proof}

%In Theorem \ref{basis[S]} given the rewriting rules for monomial $a\in\SGD\langle X\rangle$ as a~sum of monomials $N_\psi$. We denote by $\mathcal{S}$ the set of all rewriting rules obtained from Theorem~\ref{basis[S]}. Then we obtain the following result:
%\begin{theorem}
%The set $\mathcal{S}$ is a Gr\"obner-Shirshov basis of the $\SGD$-operad.
%\end{theorem}

%It is worth to note that the set $\mathcal{S}$ is a complete set of special identities that define the variety of $\SGD$-algebras.

Applying Theorem \ref{basis[S]}, we obtain the multiplication table in the free SGD-algebra. 
If $a,b\in N_{\psi}$ then we compute $a\circ b$ and $[a,b]$ as follows.
Firstly,
$$
\tau(a*b) = \sum_i \alpha_i c_i\in\Pois\Der\langle X\rangle,
$$
where $* = \circ$ or $* = [,]$, $\alpha_i \in F$ and $c_i\in N(X)$.
Secondly, by Theorem~\ref{basis[S]}, we have
$c_i = \tau\big(\sum_j \beta_{ij} d_{ij}\big)$
for some $\beta_{ij}\in F$ and $d_{ij}\in N_{\psi}$, which gives
$$
a*b = \sum_i\alpha_i\bigg(\sum_j \beta_{ij} d_{ij}\bigg).
$$

\begin{example}
We have that $[x_1,(x_2\circ x_3)\circ x_4]\notin N_{\psi}$ and 
\begin{multline*}
\tau([x_1,(x_2\circ x_3)\circ x_4])=\{x_1,x_2\}x_3'x_4'+\{x_1,x_3'\}x_2x_4'+\{x_1,x_4'\}x_2x_3' \\
 = \tau(([x_1,x_2]\circ x_3)\circ x_4
   + [x_1,x_2\circ x_3]\circ x_4
   - ([x_1,x_2]\circ x_3)\circ x_4 \\
   + [x_1,x_2\circ x_4]\circ x_3 - ([x_1,x_2]\circ x_3)\circ x_4),
\end{multline*}
which gives the identity (\ref{spec1}).

Also, $[x_2\circ x_3,x_1\circ x_4]\notin N_{\psi}$ and 
\begin{multline*}
\tau([x_4\circ x_1,x_3\circ x_2])
 = x_3x_4\{x_1',x_2\}+\{x_3,x_4\}x_2'x_1'+x_3\{x_4,x_2'\}x_1'+x_4\{x_1',x_3\}x_2' \\
 = \tau([x_3\circ x_1,x_4\circ x_2]-([x_3,x_4]\circ x_2)\circ x_1-[x_3,x_4\circ x_2]\circ x_1+([x_3,x_4]\circ x_2)\circ x_1 \\
 - [x_3\circ x_1,x_4]\circ x_2+([x_3,x_4]\circ x_2)\circ x_1+([x_3,x_4]\circ x_2)\circ x_1 \\
 + [x_4,x_3\circ x_2]\circ x_1 - ([x_4,x_3]\circ x_2)\circ x_1 + [x_4\circ x_1,x_3]\circ x_2 - ([x_4,x_3]\circ x_2)\circ x_1),
\end{multline*}
which gives the identity (\ref{spec2}).
\end{example}

%modulo $\overline{\tau(a)} = \overline{\tau(b)}$, and  $\overline{\tau(a)}$ is a~leading monomial of $\tau(a)$ under the defined order on monomials of $\Pois\Der\langle X\rangle$.

%Let us define the level for the Lie word from the Lyndon---Shirshov basis of $\Lie\langle N_\psi\rangle$ and its Lie subwords with generators from $\SGD\langle X\rangle$ as follows:
%The level of the Lie word is the number of alternating Lie brackets and %multiplications~$\circ$.

%\begin{example}
%For a Lie word $[x_1,[x_2,x_3\circ x_4]\circ x_5]$, the first level is $[y,z]$ for which $y=x_1$ and $z=[x_2,x_3\circ x_4]\circ x_5$, and the second level is $[x_2,x_3\circ x_4]$. 

%The identity (\ref{list1}) gives the opportunity to change $x_{i_1}$ and $x_{i_3}$ in the first level of Lie word. 
%\end{example}

\bigskip

\noindent Vsevolod Gubarev \\
Sobolev Institute of Mathematics \\
Acad. Koptyug ave. 4, 630090 Novosibirsk, Russia \\
Novosibirsk State University \\
Pirogova str. 2, 630090 Novosibirsk, Russia \\
e-mail: wsewolod89@gmail.com

\medskip

\noindent Bauyrzhan Kairbekovich Sartayev \\
Sobolev Institute of Mathematics \\
Suleyman Demirel University \\
Abylaikhan street, 1/1, 040900 Kaskelen, Kazakhstan \\
e-mail: baurjai@gmail.com

\end{document}